\newcommand{\tmop}[1]{\ensuremath{\operatorname{#1}}}
\newcommand{\assign}{:=}
\newcommand{\nin}{\not\in}
\newtheorem{theorem}{Theorem}[section]
\newtheorem{proposition}[theorem]{Proposition}
\newtheorem{corollary}[theorem]{Corollary}
\theoremstyle{remark}
\newtheorem{example}[theorem]{Example}
\newtheorem{remark}[theorem]{Remark}
\theoremstyle{definition}
\newtheorem{definition}[theorem]{Definition}
\title{Homology representations of unitary reflection groups}
\date{}
\author{Justin Koonin}
\dedicatory{\upshape
School of Mathematics and Statistics \\
University of Sydney, NSW 2006, Australia\\[.5em]
{\it E-mail address:} \ \texttt{justin.koonin@sydney.edu.au} }
\begin{document}

\thanks{\noindent{AMS subject classification (2010): 20F55, 05E18 }}


\thanks{\noindent{Keywords: Poset topology, unitary reflection groups, homology representations }}

\thanks{\noindent{The author is supported by ARC Grant \#DP110103451 at the University of Sydney.}}

\begin{abstract}

This paper continues the study of the poset of eigenspaces of elements of a unitary reflection group (for a fixed eigenvalue), which was commenced in \cite{Koonin2012-5} and \cite{Koonin2012-2}. The emphasis in this paper is on the representation theory of unitary reflection groups. The main tool is the theory of poset extensions due to Segev and Webb (\cite{SeWe1994}).  The new results place the well-known representations of unitary reflection groups on the top homology of the lattice of intersections of hyperplanes into a natural family, parameterised by eigenvalue.

\end{abstract}

\maketitle

\section{Introduction}

Let $V$ be a complex vector space of finite dimension, and $G \subseteq GL(V)$ a unitary reflection group in $V$.  Denote by $\mathcal{A}(G)$ the set of
reflecting hyperplanes of all reflections in $G$, and
$\mathcal{M}_{\mathcal{A}(G)}$ the hyperplane complement -- that is, the smooth
manifold which remains when all the reflecting hyperplanes are removed from
$V$. There is an extensive literature studying the topology of
$\mathcal{M}_{\mathcal{A}(G)}$ (\cite{Arnold1969}, \cite{Brieskorn1973}, \cite{OrSo1980},
\cite{OrSo1980-2}, \cite{Lehrer1995}, \cite{BlLe2001}).

In particular, Orlik and Solomon \cite[Corollary 5.7]{OrSo1980} showed that $H^{\ast} (\mathcal{M}_{\mathcal{A}(G)}, \mathbb{C})$ is determined (as a graded representation of $G$) by the poset $\mathcal{L}(\mathcal{A}(G))$ of intersections of the hyperplanes in $\mathcal{A}(G)$.  

The poset $\mathcal{L}(\mathcal{A}(G))$ is
known to coincide with the poset of fixed point subspaces (or
1-eigenspaces) of elements of $G$ (see \cite[Theorem 6.27]{OrTe1992}). This paper is the third in a series (following \cite{Koonin2012-5} and \cite{Koonin2012-2}) which uses the eigenspace theory of Springer and Lehrer (\cite{Springer1974}, \cite{LeSp1999}, \cite{LeSp1999-2}) to study generalisations of $\mathcal{L}(\mathcal{A}(G))$ for arbitrary eigenvalues. 

Whereas the focus in the first two papers was on topological properties of the posets in question, the emphasis of this paper is on representation theory. The main tool is the theory of poset extensions due to Segev and Webb (\cite{SeWe1994}). 

The papers \cite{Koonin2012-5} and \cite{Koonin2012-2} study the structure of a poset we call $\widetilde{\mathcal{S}}_{\zeta}^V (\gamma G)$ in detail, whose elements are eigenspaces of elements of a reflection coset $\gamma G$ in $V$, for fixed eignenvalue $\zeta$, ordered by the reverse of inclusion.  This poset is defined in \S\ref{subsection:taxonomy}.

The main theorem of \cite{Koonin2012-5} - Theorem 1.1 - states that in the case $\gamma = \tmop{Id}$, the poset $\widetilde{\mathcal{S}}_{\zeta}^V (G)$ is Cohen-Macaulay over
  $\mathbb{Z}$. Thus the homology of this poset is concentrated in top dimension.  However the structure of the representation of $G$ on this top homology is difficult to understand.  Indeed the most information we have at present is an exponential generating function for the dimension of the representation, when $G$ is an imprimitive reflection group (see \cite{Koonin2012-2}), as well as explicit computations for the dimensions in the other irreducible cases. (In fact, such exponential generating functions exist also in the more general setting of reflection cosets of imprimitive reflection groups.)

This paper suggests a slight modification of $\widetilde{\mathcal{S}}_{\zeta}^V (\gamma G)$, which we call $\left.\mathcal{U}'\right._{\zeta}^V (\gamma G)$. Again we define this poset in \ref{subsection:taxonomy}. Essentially, it consists of adjoining an additional element to the original poset which lies beneath all but the maximal eigenspaces.  

The motivation for this modification comes from the theory of poset extensions due to Segev and Webb (see \cite{SeWe1994}), which will be explained in \S\ref{subsection:extension}. The new poset $\left.\mathcal{U}'\right._{\zeta}^V (\gamma G)$ is shown to be homotopy equivalent to a bouquet of spheres. The number of spheres can be expressed neatly in terms of the invariant theory of $G$, and the representation on $G$ is shown to be that induced from the action of the normaliser of a maximal eigenspace on that eigenspace (see Corollary \ref{corollary:webb}).

\section{Preliminaries}

\subsection{Homology
Representations}\label{subsection:homologyrepresentations}

This section introduces the application of poset homology to representation
theory. Most of the material can be found in \cite{CuLe1981}, and see also
\cite{Rylands1990}, \cite[\S2.3]{Wachs2004}, \cite{ThWe1991}.

\begin{definition}\label{definition:G-poset}$\;$

\begin{itemize} 
\item[(i)] Suppose $(P, \leqslant)$ is a poset and $G$ a
  group. Call $P$ a $G$-poset if $G$ acts on the elements of $P$, and if for all $g
  \in G$, and all $x, y \in P$,
\[
x \leqslant_P y \quad\mbox{implies}\quad gx \leqslant_p gy .
\]
 That is, $G$ acts as a group of automorphisms of $P$.
\item[(ii)] If $P$ and $Q$ are both $G$-posets and $\phi : P \rightarrow Q$ is an
  order-preserving map such that for all $g \in G, x \in P$
\[
\phi (gx) = g \phi (x),
\]
then $\phi$ is said to be a map of $G$-posets.
\item[(iii)] If $P$ and $Q$ are $G$-posets and $\phi : P \rightarrow Q$ is an
  isomorphism of posets and also a $G$-poset map, then $\phi$ is said to be an
  isomorphism of $G$-posets.
\item[(iv)] If $P$ and $Q$ are $G$-posets and $\phi : P \rightarrow Q$ is a homotopy
  equivalence of posets and also a $G$-poset map, then $\phi$ is said to be a
  $G$-homotopy equivalence.
\item[(v)] If $P$ is a $G$-poset which is $G$-homotopy equivalent to a point, then $P$ is
  said to be $G$-contractible.
  \end{itemize}
\end{definition}

If $P$ is a $G$-poset then $G$ also acts on chains of $P$:
\[
g (x_0 < \cdots < x_k) = (gx_0 < \cdots < gx_n).
\]
Note that there is no degeneracy as $G$ acts as a group of automorphisms of
$P$.  This action of $G$ commutes with the boundary homomorphism. Thus $G$
acts on the homology modules $H_i (P, \mathbb{A})$ and $\widetilde{H}_i (P,
\mathbb{A})$, for all $i$. When the ring $\mathbb{A}$ is a
field, this gives a representation of $G$ for each $i$. Such a representation is known as a {\em homology representation} of $G$. See \cite[\S1]{CuLe1981} for further details.

Suppose $P$ is a $G$-poset, and $Q$ is an $H$-poset. Then the product $P\times Q$ is a $(G \times H)$-poset, with $(G \times H)$-action given by
\[
(g, h) (p.q) = (gp, hq),
\]
where $g \in G, h
\in H, p \in P$ and $q \in Q$. 

Clearly this action generalises to finite products of groups, so that if
$P_j$ is a $G_j$-poset for $j = 1, \ldots, n$ then $P_1 \times \cdots \times
P_n$ is a $(G_1 \times \cdots \times
G_n)$-posets.

\subsection{Taxonomy of Posets}\label{subsection:taxonomy}

The central theme of this paper is the study of homological properties of various posets of eigenspaces associated with unitary reflection groups, and associated homology representations of these groups. This section defines the posets we shall consider.

Let $V$ be a vector space over a field $\mathbb{F}$, and $\zeta \in
\mathbb{F}$. If $x \in \tmop{End} (V)$, recall that $V (x, \zeta)$ is the $\zeta$-eigenspace of
$x$ acting on $V$. That is, $V (x, \zeta) \assign \{v \in V \mid x v = \zeta
v\}.$

\begin{definition}
  \label{definition:S_v(zeta)} Let $\gamma G$ be a reflection coset in $V
  =\mathbb{C}^n$, and $\zeta \in \mathbb{C}^{\times}$ be a complex root of
  unity. Define $\mathcal{S}_{\zeta}^V (\gamma G)$ to be the set $\{V (x,
  \zeta) \mid x \in \gamma G\}$, partially ordered by the reverse of
  inclusion. 
\end{definition}

\begin{remark}\label{remark:action}
  There is a natural action of $G$ on
  $\mathcal{S}_{\zeta}^V (\gamma G)$ which arises from the action of $G$ on
  $V$. If $g \in G$ and $V (\gamma x, \zeta) \in \mathcal{S}_{\zeta}^V
  (\gamma G)$, then $g \cdot V (\gamma x, \zeta) \assign V (g \gamma xg^{- 1},
  \zeta) = V (\gamma g' xg^{- 1}, \zeta)$ for some $g' \in G$, since $\gamma$
  normalises $G$. This action clearly respects the order relation on
  $\mathcal{S}_{\zeta}^V (\gamma G)$, and hence turns $\mathcal{S}_{\zeta}^V
  (\gamma G)$ into a $G$-poset.
\end{remark}

It is known (see \cite[Corollary 3.3]{Koonin2012-5}) that the poset
$\mathcal{S}_{\zeta}^V (\gamma G)$ always has a unique maximal element $\hat{1}$, and
it may or may not have a unique minimal element $\hat{0}$ as
well (the full space $V$, for example). This is important to remember in the definitions of the following
posets, which are modifications of $\mathcal{S}_{\zeta}^V (\gamma G)$:

\begin{definition}
  \label{definition:Stilde_v(zeta)} Define $\widetilde{\mathcal{S}}_{\zeta}^V (\gamma
  G)$ to be the subposet of $\mathcal{S}_{\zeta}^V (\gamma G)$ obtained by
  removing the unique maximal element, as well as the unique minimal element if it
  exists.
\end{definition}

\begin{definition}\label{definition:S'_v(zeta)}
 Define $\left.\mathcal{S}'\right._{\zeta}^V (\gamma G)$ to
  be the poset $\mathcal{S}_{\zeta}^V (\gamma G) \backslash \{ \hat{1} \}$.
\end{definition}

The difference between
$\widetilde{\mathcal{S}}_{\zeta}^V (\gamma G)$ and $\left.\mathcal{S}'\right._{\zeta}^V (\gamma
G)$ is that the former does not contain the unique minimal element of
$\mathcal{S}_{\zeta}^V (\gamma G)$ (if it exists), whereas the latter does.

\begin{definition}\label{definition:T'_v(zeta)}
  Define $\left.\mathcal{T}'\right._{\zeta}^V (\gamma G)$ to
  be the subposet of $\left.\mathcal{S}'\right._\zeta^V (\gamma G)$ consisting of
  eigenspaces which are {\em not} maximal.
\end{definition}

That is, $\left.\mathcal{T}'\right._{\zeta}^V (\gamma G)$ is the subposet of $\left.\mathcal{S}'\right._\zeta^V (\gamma G)$ obtained by deleting elements of rank 0.

\begin{definition}\label{definition:U'_v(zeta)} Define the poset $\left.\mathcal{U}'\right._{\zeta}^V (\gamma G)$ as follows.
 The elements of $\left.\mathcal{U}'\right._{\zeta}^V
  (\gamma G)$ are those of $\left.\mathcal{S}'\right._{\zeta}^{V} (\gamma G)$ together
  with one additional element $\hat{0}_S$. The order relation on
  $\left.\mathcal{U}'\right._{\zeta}^V (\gamma G)$ is the following. Given $x, y \in
  \left.\mathcal{U}'\right._{\zeta}^V (\gamma G)$, $x < y$ if and only if either $x, y
  \in\left. \mathcal{S}'\right._{\zeta}^{ V} (\gamma G)$ and $x < y$ in $\left. \mathcal{S}'\right._{\zeta}^{ V} (\gamma G)$, or $x =\hat{0}_S$ and $y
  \in\left.\mathcal{T}'\right._{\zeta}^V (\gamma G)$.
\end{definition}

The purpose of this construction will become apparent in
{\S}\ref{subsection:extension2}.

In the special case $\zeta
= 1, \gamma = \tmop{Id}$, the posets we have defined take the following form.  The posets $\widetilde{\mathcal{S}}_1^V (G)$ and $\left.\mathcal{T}'\right._1^V (G)$ are equal to the poset $\widetilde{\mathcal{L}(\mathcal{A}(G))}$ (the intersection lattice of the reflecting hyperplanes, with minimal and maximal elements removed), $ \left.\mathcal{S}'\right._1^V (G)$ is this same intersection lattice with only the maximal element removed, while $\left.\mathcal{U}'\right.^V_1 (G)$ is the suspension of $\widetilde{\mathcal{S}}_1^V (G)$ (see Definition \ref{definition:suspension} and Proposition \ref{proposition:US}).

It is clear that $\widetilde{\mathcal{S}}_{\zeta}^V (\gamma G)$,
$\left.\mathcal{S}'\right._{\zeta}^V (\gamma G)$ and $\left.\mathcal{T}'\right._{\zeta}^V (\gamma G)$
are subposets of $\mathcal{S}_{\zeta}^V (\gamma G)$ which are stable under the
action of $G$, and are therefore $G$-posets themselves. Define an action of
$G$ on $\left.\mathcal{U}'\right._{\zeta}^V (\gamma G)$ by letting $G$ act trivially on the
additional element $\hat{0}_S$. This makes $\left.\mathcal{U}'\right._{\zeta}^V (\gamma G)$
into a $G$-poset as well.

\begin{remark}
  The posets which hold the most interest for us are
  $\widetilde{\mathcal{S}}_{\zeta}^V (\gamma G)$ and $\left.\mathcal{U}'\right._{\zeta}^V
  (\gamma G)$. The others may be regarded as intermediary posets, whose
  definition is necessary to facilitate the study of these two.  The papers \cite{Koonin2012-5} and \cite{Koonin2012-2} study the structure of $\widetilde{\mathcal{S}}_{\zeta}^V (\gamma G)$ in detail.  This paper deals with the poset $\left.\mathcal{U}'\right._{\zeta}^V
  (\gamma G)$.
\end{remark}

\section{Poset Extensions}\label{subsection:extension}

The background material on extensions of
$G$-posets comes from \cite{SeWe1994}, and the exposition and notation follows that
paper closely.  Proofs of the results in this section can be found in that paper, and in \cite{Koonin2012}. In this section, all homology is taken over $\mathbb{Z}$ unless otherwise stated.
 
\begin{definition}\label{definition:extension}
  Let $Q$ be a subposet of $P$. The poset $P$ is said
  to be an {\em extension} of $Q$ if $Q$ is an upper order ideal of $P$, and if for all $p
  \in P,$ $Q_{\geqslant p} \neq \emptyset$.

  If $P$ is an extension of $Q$ such that for all $p \in P$ either $p \in Q$ or
  $p$ is a minimal element of $P$, then $P$ is said to be an extension of $Q$ by
  minimal elements.
\end{definition}

\begin{definition}\label{definition:Pq}
  If $P$ is an extension of $Q$, define a new poset $P_Q$ as
  follows.

  The elements of $P_Q$ are those of $P$, together with one additional element
  $\hat{0}_Q$. Given $x, y \in P_Q$, define $x <_{P_Q} y$ if and only
  if either $x, y \in P$ and $x <_P y$, or $x = \hat{0}_Q$ and $y \in Q$.

  Denote by $Q_Q \subseteq P_Q$ the poset with elements $Q \cup \hat{0}_Q$
  and the same order relation as $P_Q$. Thus $Q_Q$ is just $Q$ with a
  minimal element adjoined.
\end{definition}

If $P$ is a $G$-poset and $Q$ is stable under the action of $G$, then $P_Q$
becomes a $G$-poset by letting $G$ act trivially on $\hat{0}_Q$.

Denote the
simiplicial chain group of a poset $P$ at dimension $n$ by $C_n (P)$ ($n
\geqslant 0$), and $\widetilde{C}_n (P)$ the augmented simplicial chain group. Thus $\widetilde{C}_n (P) = C_n (P)$ for $n \geqslant 0$, while $\widetilde{C}_{- 1}
(P) =\mathbb{Z}$. As usual let $Z_n (P)$ ($\widetilde{Z}_n (P)$) be the group
of $n$-cycles, $B_n (P)$ ($\widetilde{B}_n (P))$ the group of $n$-boundaries. 
Given a cycle $z \in \widetilde{Z}_n (P)$ denote by $[z] = z + \widetilde{B}_n (P)$
the corresponding element in $\widetilde{H}_n (P)$.

\begin{proposition}{\cite[Proposition 1.1]{SeWe1994}}\label{proposition:MV} Suppose $P$ is an extension
  of $Q$. Then
\begin{itemize}
\item[(i)] $\Delta P_Q = \Delta P \cup \Delta Q_Q$ and $\Delta P = \Delta Q_Q \cap
  \Delta Q$.
\item[(ii)] There is a long exact Mayer-Vietoris sequence in reduced homology
  given by
\[
\cdots \rightarrow \;\widetilde{H}_n (Q)\; \xrightarrow{\iota_*}\;
  \widetilde{H}_n (P) \;\xrightarrow{\kappa_*} \;\widetilde{H}_n (P_Q)\; \xrightarrow{r}
 \; \widetilde{H}_{n - 1} (Q)\; \rightarrow \cdots
  \]
where $\iota_{\ast}$, $\kappa_{\ast}$ are the maps on homology
  induced by the obvious inclusion maps $\iota$, $\kappa,$ and $r$ is given as
  follows. If $\alpha \in \widetilde{C}_n (P)$ and $\beta \in \widetilde{C}_n (Q_Q)$
  are such that $\partial (\alpha + \beta) = 0$, then $r ([\alpha + \beta)] =
  [\partial \alpha]$, where $\partial$ is the differential map of $P_Q$. If
  $P$ is a $G$-poset and $Q$ is stable under the action of $G$, then then
  Mayer-Vietoris sequence is one of $\mathbb{Z}G$-modules.
  \end{itemize}
\end{proposition}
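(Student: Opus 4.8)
The plan is to derive everything from the simplicial (order complex) picture and the standard Mayer–Vietoris sequence for a simplicial complex that is the union of two subcomplexes. First I would prove part (i), which is essentially combinatorial. Recall that a chain of $P_Q$ is a totally ordered subset of $P_Q$; since $\hat 0_Q$ lies below exactly the elements of $Q$ (and below nothing else), any chain of $P_Q$ containing $\hat 0_Q$ has all its other elements in $Q$, hence lies in $\Delta Q_Q$, while any chain not containing $\hat 0_Q$ lies in $\Delta P$. This gives $\Delta P_Q = \Delta P \cup \Delta Q_Q$. For the intersection: a simplex lies in both $\Delta P$ and $\Delta Q_Q$ iff it is a chain in $P$ all of whose elements lie in $Q$ (no chain in $\Delta Q_Q$ except those avoiding $\hat 0_Q$ can lie in $\Delta P$, since $\hat0_Q\notin P$), i.e.\ a chain in $Q$; so $\Delta P \cap \Delta Q_Q = \Delta Q$. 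Here one uses crucially that $Q$ is an \emph{upper order ideal} of $P$, so that a chain of $P$ with all elements in $Q$ is the same thing as a chain of $Q$, and the extension hypothesis $Q_{\geqslant p}\neq\emptyset$ is not even needed for (i) but guarantees that $\Delta P_Q$ is what one wants (no new isolated pieces).

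For part (ii), I would invoke the classical Mayer–Vietoris sequence in reduced simplicial homology for the decomposition $\Delta P_Q = \Delta P \cup \Delta Q_Q$ with intersection $\Delta Q$, obtained from (i). This yields the long exact sequence
\[
\cdots \to \widetilde H_n(\Delta Q) \xrightarrow{(\iota_*, -j_*)} \widetilde H_n(\Delta P)\oplus \widetilde H_n(\Delta Q_Q) \to \widetilde H_n(\Delta P_Q) \xrightarrow{\partial} \widetilde H_{n-1}(\Delta Q)\to\cdots .
\]
The key simplification is that $\Delta Q_Q$ is a cone (with apex $\hat 0_Q$), hence contractible, so $\widetilde H_n(\Delta Q_Q)=0$ for all $n$; substituting this collapses the middle term to $\widetilde H_n(P)$ and produces exactly the stated three-term pattern $\widetilde H_n(Q)\to\widetilde H_n(P)\to\widetilde H_n(P_Q)\to\widetilde H_{n-1}(Q)$. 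Then I would identify the connecting map $r$ with the explicit formula given: a class in $\widetilde H_n(P_Q)$ is represented by a cycle which, by the decomposition, can be written $\alpha+\beta$ with $\alpha\in\widetilde C_n(\Delta P)$ and $\beta\in \widetilde C_n(\Delta Q_Q)$; the Mayer–Vietoris connecting homomorphism sends $[\alpha+\beta]$ to the class of $\partial\alpha$ (equivalently $-\partial\beta$) viewed as a cycle in $\widetilde C_{n-1}(\Delta Q)$. This is just the unwinding of the snake-lemma definition of the connecting map applied to the short exact sequence $0\to \widetilde C_*(\Delta Q)\to \widetilde C_*(\Delta P)\oplus\widetilde C_*(\Delta Q_Q)\to \widetilde C_*(\Delta P_Q)\to 0$, so it is routine but should be spelled out to match the claimed normalization.

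Finally, the equivariance: if $P$ is a $G$-poset and $Q$ is $G$-stable, then so is $P_Q$ (with $\hat 0_Q$ fixed), and all the subcomplexes $\Delta P$, $\Delta Q_Q$, $\Delta Q$ are $G$-stable subcomplexes of $\Delta P_Q$; the short exact sequence of chain complexes above is then a sequence of $\mathbb ZG$-modules and $\mathbb ZG$-module maps, so the entire long exact sequence is one of $\mathbb ZG$-modules. I do not expect any serious obstacle here: the only point requiring a little care is matching the explicit description of $r$ with the abstract connecting map, and making sure the cone contractibility of $\Delta Q_Q$ is compatible with the $G$-action only insofar as we need its homology to vanish (which is automatic once $G$ acts, since the $G$-action commutes with the contracting homotopy toward the fixed apex $\hat 0_Q$). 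The genuine content is entirely in part (i); (ii) is then a formal consequence of Mayer–Vietoris plus the contractibility of a cone.
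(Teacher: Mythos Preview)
Your proof is correct and follows exactly the standard route. Note, however, that the paper does not actually supply its own proof of this proposition: it is quoted verbatim from Segev--Webb \cite[Proposition 1.1]{SeWe1994}, and the paper explicitly states that proofs of all results in that section are to be found in \cite{SeWe1994} and \cite{Koonin2012}. Your argument---establish the combinatorial decomposition in (i), then apply the simplicial Mayer--Vietoris long exact sequence and collapse the $\widetilde H_*(\Delta Q_Q)$ terms using the contractibility of the cone on $\hat 0_Q$---is precisely the expected one and is essentially the proof given in Segev--Webb.

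One incidental observation: the second equality in (i) as printed, $\Delta P = \Delta Q_Q \cap \Delta Q$, is evidently a typographical error for $\Delta Q = \Delta P \cap \Delta Q_Q$; you prove the correct version, which is what is actually needed for the Mayer--Vietoris argument in (ii).
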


When $P$ is an extension of $Q$ by miminal elements, it is possible to show
that $\Delta P_Q$ is homotopy equivalent to a wedge of suspensions of certain
other posets. Before describing how this is done, it is necessary to define
poset analogues for some common topological constructions.

\begin{definition}\label{definition:suspension}
  Let $R$ be a poset. Define the {\em suspension} of $R$,
  denoted $\Sigma R$, as follows:

  The elements of $\Sigma R$ are those of $R$, together with two additional
  elements $\hat{0}_R$ and $\hat{0}_R'$. Given $x, y \in \Sigma R$, define
  $x <_{\Sigma R} y$ if and only if $x = \hat{0}_R$ and $y \neq \hat{0}_R'$,
  or $x = \hat{0}_R'$ and $y \neq \hat{0}_R$, or $x, y \in R$ and $x <_R y$.
\end{definition}

In order to describe the action of $G$ on the homology of $R$ and $\Sigma R$,
some more notation is needed. If $s = (r_0 < r_1 < \cdots < r_{n - 1})$ is
an $(n - 1)$-simplex of $R$ and $r < r_0$, define $r \ast s \assign (r < r_0 <
r_1 < \cdots < r_{n - 1})$. If $z \in \widetilde{Z}_{n - 1} (R)$, write $z =
\sum_{i = 1}^m n_i s_i$, where $s_i$ is an $(n - 1)$-simplex of $R$. Define
$\hat{0}_R \ast z \assign \sum_{i = 1}^m n_i ( \hat{0}_R \ast s_i) \in
\widetilde{C}_n (\Sigma R)$, and define $\hat{0}_R' \ast z$ similarly. Also
define $\Sigma (z) \assign \hat{0}_R \ast z - \hat{0}'_R \ast z.$

If $\Delta$ is an abstract
simplicial complex and $\Phi$ is the abstract simplicial complex consisting of
two distinct, isolated vertices $v_1$ and $v_2$, then the suspension of
$\Delta$, denoted $\Sigma (\Delta)$, is the join $\Phi \ast
\Delta$.

\begin{proposition}{\cite[Proposition 2.1]{SeWe1994}}\label{proposition:suspension}) Let $R$ be a
  $G$-poset. Then
\begin{itemize}
\item[(i)] There is a $G$-equivariant homeomorphism $\Delta (\Sigma R) \cong_G
  \Sigma (\Delta R) .$
\item[(ii)] For $n \geqslant 1$, if $z \in \widetilde{Z}_{n - 1} (R)$, then
  $\partial ( \hat{0}_R \ast z) = \partial ( \hat{0}_R' \ast z) = z$, where
  $\partial$ is the differential map of $\Sigma R$. Thus $\Sigma (z) \in
  \widetilde{Z}_n (\Sigma R) .$
\item[(iii)] The map $\widetilde{H}_{n - 1} (R) \rightarrow \widetilde{H}_n
  (\Sigma R)$ given by $[z] \rightarrow [\Sigma (z)]$ is an isomorphism of
  $\mathbb{Z}G$-modules.
  \end{itemize}
\end{proposition}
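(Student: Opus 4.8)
The plan is to establish the three parts in order, with part (i) doing most of the geometric work and parts (ii), (iii) following by elementary chain-level computation. For part (i), I would describe the simplicial structure of $\Delta(\Sigma R)$ explicitly: a simplex of $\Delta(\Sigma R)$ is a chain in $\Sigma R$, and by Definition \ref{definition:suspension} such a chain is either a chain in $R$, or a chain in $R$ with $\hat{0}_R$ prepended, or one with $\hat{0}_R'$ prepended (but never both, since $\hat{0}_R$ and $\hat{0}_R'$ are incomparable). This is precisely the simplicial structure of the join $\Phi \ast \Delta R$, where $\Phi$ consists of the two isolated vertices $v_1 = \hat{0}_R$, $v_2 = \hat{0}_R'$. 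The identification is the obvious one on vertices, extended affinely on each simplex; I would check it is a well-defined simplicial isomorphism, hence induces a homeomorphism on geometric realisations. For $G$-equivariance, observe that $G$ fixes both $\hat{0}_R$ and $\hat{0}_R'$ (they are adjoined with trivial action, exactly as for $\hat{0}_Q$ in Definition \ref{definition:Pq}), while acting on $R$ as before; the identification map commutes with these actions by construction.

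For part (ii), I would compute $\partial(\hat{0}_R \ast s)$ directly for an $(n-1)$-simplex $s = (r_0 < \cdots < r_{n-1})$ of $R$ with $n \geqslant 1$. By the simplicial boundary formula, $\partial(\hat{0}_R \ast s) = \partial(r < r_0 < \cdots < r_{n-1})$ is the alternating sum over deleted vertices; deleting $\hat{0}_R$ yields $+s$, and the remaining terms are exactly $-\hat{0}_R \ast (\partial s)$. Hence $\partial(\hat{0}_R \ast s) = s - \hat{0}_R \ast \partial s$. Summing over $z = \sum n_i s_i$ and using $\partial z = 0$ gives $\partial(\hat{0}_R \ast z) = z$, and likewise for $\hat{0}_R'$. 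Therefore $\partial \Sigma(z) = \partial(\hat{0}_R \ast z) - \partial(\hat{0}_R' \ast z) = z - z = 0$, so $\Sigma(z) \in \widetilde{Z}_n(\Sigma R)$, with the case $n = 1$ (where $z \in \widetilde{Z}_0(R)$ involves the augmentation $\widetilde{C}_{-1} = \mathbb{Z}$) handled the same way.

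For part (iii), the map $[z] \mapsto [\Sigma(z)]$ is well-defined since $\Sigma$ sends boundaries to boundaries (if $z = \partial w$ then $\Sigma(z) = \partial(\hat{0}_R \ast w - \hat{0}_R' \ast w \pm \Sigma(w))$, or more cleanly one checks $\Sigma(\partial w) = \pm\partial(\hat{0}_R \ast w - \hat{0}_R' \ast w)$ up to the correction term, so the class is unchanged), and it is $\mathbb{Z}G$-linear because $g(\hat{0}_R \ast s) = \hat{0}_R \ast (gs)$ as $g$ fixes $\hat{0}_R$. To see it is an isomorphism, I would invoke part (i) together with the standard computation of the reduced homology of a suspension: $\widetilde{H}_n(\Sigma(\Delta R)) \cong \widetilde{H}_{n-1}(\Delta R)$, via the Mayer--Vietoris sequence for the decomposition of $\Sigma(\Delta R)$ into the two cones over $\Delta R$ (both contractible, intersection $\Delta R$), and check that the connecting isomorphism is realised at the chain level precisely by $[z] \mapsto [\hat{0}_R \ast z - \hat{0}_R' \ast z]$. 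The main obstacle is bookkeeping: getting the signs in the boundary formula for $\hat{0}_R \ast s$ exactly right and confirming that the classical suspension isomorphism agrees on the nose with the explicit chain map $\Sigma(-)$ rather than merely up to sign — this is where care is needed, though no genuine difficulty arises.
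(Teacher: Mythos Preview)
The paper does not supply its own proof of this proposition: it is quoted verbatim from \cite{SeWe1994} (see the remark preceding Definition~\ref{definition:extension} that ``Proofs of the results in this section can be found in that paper''), and no proof environment follows the statement. Your sketch is the standard argument and is essentially correct; the only place where you are slightly loose is the well-definedness check in part~(iii), where the cleanest route is to note from part~(ii) that if $z=\partial w$ with $w\in\widetilde{C}_{n-1}(R)$ then $\Sigma(z)=\partial(\hat{0}_R\ast w)-\partial(\hat{0}_R'\ast w)=\partial\bigl(\hat{0}_R\ast w-\hat{0}_R'\ast w\bigr)$ is already a boundary, without any correction term.
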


It is also necessary to define a wedge of suspensions of of a set of posets.
\begin{definition}
  \label{definition:wedgeposet}Suppose $\{R_t \mid t \in \mathcal{T}\}$ is a
  family of posets indexed by some set $\mathcal{T}$. Define the {\em wedge of
  suspensions} of the poset $R_t$, denoted $\bigvee_{t \in \mathcal{T}} \Sigma
  R_t$, as follows.

  The elements of $\bigvee_{t \in T} \Sigma R_t$ are defined to be $\bigcup_{t
  \in \mathcal{T}} (R_t \times \{t\}) \cup \mathcal{T} \cup \{ \hat{0} \}$. 
  Define a partial order on this set as follows. For $t \in \mathcal{T}$
  define $j_t : R_t \times \{t\} \rightarrow R_t$ by $j_t (r, t) = r$.

  If $x, y \in \bigvee_{t \in T} \Sigma R_t$, define $x < y$ if and only if
  one of the following holds:
\begin{itemize}
\item[(i)] there exists $t \in \mathcal{T}$ such that $x, y \in R_t \times
  \{t\}$ and $j_t (x) < j_t (y),$
\item[(ii)] $x = t \in \mathcal{T}$ and $y \in R_t \times \{t\}$,
\item[(iii)] $x = \hat{0}$ and $y \nin \mathcal{T} \cup \{ \hat{0} \}$.
\end{itemize}
\end{definition}
Note that $R_t \times \{t\}$ can be identified with $R_t$. The use of $R_t
\times \{t\}$ is to ensure that all sets are disjoint as $t$ runs through
$\mathcal{T}$. In the following proposition this identification is made.

\begin{proposition}{\cite[Proposition 2.2]{SeWe1994}}  \label{proposition:wedge}
Suppose $\{R_t \mid t \in \mathcal{T}\}$ is a family of
  posets. Then:
\begin{itemize}
\item[(i)] $\Delta ( \bigvee_{t \in T} \Sigma R_t) \cong \bigvee_{t \in
  \mathcal{T}} \Sigma (\Delta R_t),$
\item[(ii)] for $n \geqslant 1$ the map
\[
\mu : \bigoplus_{t \in \mathcal{T}}
  \widetilde{H}_{n - 1} (R_t) \rightarrow \widetilde{H}_n ( \bigvee_{t \in
  \mathcal{T}} \Sigma R_t) 
  \]
defined by $\mu ( \sum_{t \in \mathcal{T}} [z_t]) = \sum_{t \in
  \mathcal{T}} [t \ast z_t - \hat{0} \ast z_t]$ is an isomorphism, where for
  all $t \in \mathcal{T}$, $z_t \in \widetilde{Z}_{n - 1} (R_t) .$
  \end{itemize}
\end{proposition}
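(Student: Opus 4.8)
The plan is to establish (i) by a direct analysis of the simplices of $\Delta\bigl(\bigvee_{t\in\mathcal{T}}\Sigma R_t\bigr)$, and then to deduce (ii) by combining (i) with Proposition~\ref{proposition:suspension} and the fact that reduced homology carries a wedge to a direct sum.

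For (i), fix $t\in\mathcal{T}$ and consider the subposet of $\bigvee_{t'}\Sigma R_{t'}$ on the vertex set $\{\hat{0},\,t\}\cup(R_t\times\{t\})$. By clauses (i)--(iii) of Definition~\ref{definition:wedgeposet}, both $t$ and $\hat{0}$ are minimal in this subposet, are mutually incomparable, and lie below every element of $R_t\times\{t\}$, while the order induced on $R_t\times\{t\}$ is that of $R_t$ via $j_t$. Hence $j_t$ together with $t\mapsto\hat{0}_{R_t}$ and $\hat{0}\mapsto\hat{0}'_{R_t}$ is a poset isomorphism of this subposet onto $\Sigma R_t$, so $\Delta(\Sigma R_t)$ embeds as a subcomplex of $\Delta\bigl(\bigvee_{t'}\Sigma R_{t'}\bigr)$, and with it, via Proposition~\ref{proposition:suspension}(i), a copy of $\Sigma(\Delta R_t)$. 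Next I check that every chain $C$ of $\bigvee_{t'}\Sigma R_{t'}$ lies in one of these subcomplexes: distinct elements $(r,t)$, $(r',t')$ of $\bigcup_{t'}R_{t'}\times\{t'\}$ with $t\neq t'$ are incomparable, and every $t'\in\mathcal{T}$ is incomparable to $\hat{0}$ and to every other element of $\mathcal{T}$; so a chain cannot meet two distinct sets $R_t\times\{t\}$, a chain containing some $t\in\mathcal{T}$ uses no further vertices outside $R_t\times\{t\}$, and a chain containing $\hat{0}$ uses no vertex of $\mathcal{T}$ and no two of the $R_t\times\{t\}$. Therefore $\Delta\bigl(\bigvee_{t'}\Sigma R_{t'}\bigr)=\bigcup_{t}\Delta(\Sigma R_t)$, and any two of these subcomplexes meet exactly in the single vertex $\hat{0}$. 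This is precisely the wedge $\bigvee_t\Delta(\Sigma R_t)$ with basepoint $\hat{0}$, which (consistently choosing in each factor the suspension vertex corresponding to $\hat{0}'_{R_t}$) is $\bigvee_t\Sigma(\Delta R_t)$.

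For (ii), apply reduced simplicial homology to the decomposition of (i). Since the common basepoint $\hat{0}$ is a single vertex, hence a subcomplex, the reduced Mayer--Vietoris sequence (cf.\ Proposition~\ref{proposition:MV}), applied iteratively over $\mathcal{T}$ and passed to the colimit over finite subsets when $\mathcal{T}$ is infinite, shows that the obvious inclusions $\Delta(\Sigma R_t)\hookrightarrow\Delta\bigl(\bigvee_{t'}\Sigma R_{t'}\bigr)$ induce an isomorphism $\bigoplus_{t}\widetilde{H}_n(\Sigma R_t)\xrightarrow{\ \sim\ }\widetilde{H}_n\bigl(\bigvee_{t}\Sigma R_t\bigr)$ for every $n$; the colimit step is harmless because every cycle has finite support. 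By Proposition~\ref{proposition:suspension}(iii), for $n\geqslant 1$ and each $t$ the assignment $[z_t]\mapsto[\Sigma(z_t)]=[\hat{0}_{R_t}\ast z_t-\hat{0}'_{R_t}\ast z_t]$ is an isomorphism $\widetilde{H}_{n-1}(R_t)\xrightarrow{\sim}\widetilde{H}_n(\Sigma R_t)$. Composing these two isomorphisms and tracking a generator through the poset identification of the previous paragraph --- under which $\hat{0}_{R_t}\ast z_t$ becomes $t\ast z_t$, $\hat{0}'_{R_t}\ast z_t$ becomes $\hat{0}\ast z_t$, and the inclusion is just the inclusion of chains --- one obtains exactly the map $\mu$ of the statement. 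Hence $\mu$ is an isomorphism.

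I expect the only real work to be bookkeeping. In (i) it is the case analysis showing that no chain straddles two of the sets $R_t\times\{t\}$ or mixes $\hat{0}$ with a vertex of $\mathcal{T}$; this rests entirely on the incomparabilities built into Definition~\ref{definition:wedgeposet} (which encode the fact that the two suspension vertices of each $\Sigma R_t$ are never joined, matching $\Phi=\{v_1,v_2\}$). In (ii) the delicate point is to match the explicit formula for $\mu$ with the abstract composite of isomorphisms, the one subtlety being the choice --- made uniformly over all $t$ --- of which of the two suspension vertices of $\Sigma R_t$ is glued to $\hat{0}$; an infinite index set $\mathcal{T}$ causes no difficulty since simplicial homology commutes with the relevant direct limits.
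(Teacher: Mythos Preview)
Your argument is correct. The paper does not supply its own proof of this proposition but merely cites \cite{SeWe1994}; your direct chain analysis showing $\Delta\bigl(\bigvee_t\Sigma R_t\bigr)=\bigcup_t\Delta(\Sigma R_t)$ with common point $\hat{0}$, followed by the wedge-sum formula for reduced homology and Proposition~\ref{proposition:suspension}(iii), is exactly the standard route.
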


Of particular interest is the case when $P$ is an extension of $Q$ by minimal
elements. Let the indexing set $\mathcal{T}$ be the set $\mathcal{M}= P\backslash Q$
of elements in $P$ but not $Q$. By definition, this set consists of minimal
elements of $P$. Also take the posets $R_t$ to be the subsposets $P_{> m}$,
$m \in \mathcal{M}$. Recall that by
definition, $P_{> m} ;=\{x \in P \mid x > m\}.$

If $P$ is a $G$-poset and $Q$ is invariant under the action of $G$, then the
poset $\bigvee_{m \in \mathcal{M}} \Sigma P_{> m}$ admits an action of $G$,
defined by
\begin{alignat*}{2}
g \cdot (p, m) &= (g \cdot p,
g \cdot m)&\quad\mbox{ for $(p, m) \in P_{> m} \times \{m\}$},\\
g \cdot m &= m&\mbox{ for $m \in \mathcal{M}$},\\
g \cdot \hat{0} &= \hat{0}.&
\end{alignat*}
Hence the homology groups of $\bigvee_{m \in \mathcal{M}} \Sigma P_{> m}$
become $\mathbb{Z}G$-modules.

There is also an action of $G$ on the simplicial complex $\bigvee_{m \in
\mathcal{M}} \Sigma (\Delta P_{> m})$. With this action, if $x \in \Sigma
(\Delta P_{> m})$ and $g \in G$, then $g \cdot x \in \Sigma (\Delta
(P_{> g \cdot m})$.

\begin{proposition}{\cite[Proposition 2.3]{SeWe1994}}\label{proposition:wedge2} Suppose that $P$ is an
  extension of $Q$ by minimal elements. Further, suppose that $P$ is a $G$-poset
  and that $Q$ is stable under the action of $Q$. Let $\mathcal{M}= P\backslash Q$. Then
\begin{itemize}
\item[(i)] There is a $G$-equivariant homeomorphism
\[
\Delta \Bigl( \bigvee_{m \in
  \mathcal{M}} \Sigma P_{> m}\Bigr) \cong_G \bigvee_{m \in \mathcal{M}} \Sigma
  (\Delta P_{> m}).
  \]
\item[(ii)] For $n \geqslant 1$ the group $\oplus_{m \in \mathcal{M}}
  \widetilde{H}_{n - 1} (P_{> m})$ acquires the structure of an induced
  $\mathbb{Z}G$-module
\[
\bigoplus_{m \in \mathcal{M}}
  \widetilde{H}_{n - 1} (P_{> m}) \simeq_G \bigoplus_{m \in [G \backslash
  \mathcal{M}]} \tmop{Ind}_{G_m}^G (\widetilde{H}_{n - 1} (P_{> m})),
  \]
where $G_m$ denotes the stabiliser of m in G, and $[G
  \backslash \mathcal{M}]$ denotes the set of G-orbits on $\mathcal{M}$. The
  mapping
\[\mu : \bigoplus_{m \in \mathcal{M}}
  \widetilde{H}_{n - 1} (P_{> m}) \rightarrow \widetilde{H}_n \Bigl( \bigvee_{m \in
  \mathcal{M}} \Sigma P_{> m}\Bigr)
  \]
of Proposition \ref{proposition:wedge} is an isomorphism of
  $\mathbb{Z}G$-modules.
  \end{itemize}
\end{proposition}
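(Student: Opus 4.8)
The plan is to reduce both parts to the non-equivariant assertions of Propositions~\ref{proposition:suspension} and~\ref{proposition:wedge}, checking at each stage that the maps produced there respect the $G$-action, and to add one elementary lemma on modules carrying a system of imprimitivity. The starting observation is that since $G$ acts on $P$ by poset automorphisms and stabilises $Q$ setwise, it permutes the set $\mathcal{M}=P\backslash Q$ of minimal elements, and for each $g\in G$, $m\in\mathcal{M}$ the restriction of $g$ is a poset isomorphism $P_{>m}\xrightarrow{\sim}P_{>g\cdot m}$. This is exactly the data needed to make $\bigvee_{m\in\mathcal{M}}\Sigma P_{>m}$ into a $G$-poset via the formulas in the statement: one checks directly from Definition~\ref{definition:wedgeposet} that each of the three clauses defining the order relation is carried to itself, using that $g$ sends the copy $P_{>m}\times\{m\}$ to $P_{>g\cdot m}\times\{g\cdot m\}$, permutes the apex vertices by $m\mapsto g\cdot m$, and fixes $\hat 0$ (so in particular the trivial action on $\hat 0$ is forced).

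For part (i), the key point is that the homeomorphism of Proposition~\ref{proposition:wedge}(i) is defined part-by-part and simplex-by-simplex: a simplex of $\Delta\bigl(\bigvee_m\Sigma P_{>m}\bigr)$ lying in the $m$-th part corresponds to the ``same'' simplex of $\Sigma(\Delta P_{>m})\subseteq\bigvee_m\Sigma(\Delta P_{>m})$, with the apex vertices matching up. Since $G$ acts on both sides by permuting the parts along $\mathcal{M}$ and by the isomorphisms $P_{>m}\to P_{>g\cdot m}$ inside the parts, this correspondence visibly intertwines the two actions, so the homeomorphism is already $G$-equivariant; equivalently, one may assemble it from the $G$-equivariant homeomorphisms $\Delta(\Sigma R)\cong_G\Sigma(\Delta R)$ of Proposition~\ref{proposition:suspension}(i), applied with $R=P_{>m}$ compatibly over the $G$-set $\mathcal{M}$.

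For part (ii), I would first establish the induced-module identity. The abelian group $M:=\bigoplus_{m\in\mathcal{M}}\widetilde{H}_{n-1}(P_{>m})$ is a direct sum of subgroups on which $G$ acts by carrying the $m$-summand isomorphically onto the $(g\cdot m)$-summand (via the map on homology induced by $g$), while the stabiliser $G_m$ preserves the $m$-summand $\widetilde{H}_{n-1}(P_{>m})$, giving it the structure of a $\mathbb{Z}G_m$-module. The standard lemma on modules with a system of imprimitivity then yields, for any set $[G\backslash\mathcal{M}]$ of orbit representatives, a $\mathbb{Z}G$-isomorphism $M\simeq_G\bigoplus_{m\in[G\backslash\mathcal{M}]}\tmop{Ind}_{G_m}^G\widetilde{H}_{n-1}(P_{>m})$, under which $\sum_{gG_m\in G/G_m}g\cdot z$ (with $z$ in the $m$-slot) corresponds to $\sum g\otimes z$. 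It then remains only to check that the isomorphism $\mu$ of Proposition~\ref{proposition:wedge}(ii) is $\mathbb{Z}G$-linear; since it is already an isomorphism of abelian groups, it suffices to verify equivariance on generators, and for $z_m\in\widetilde{Z}_{n-1}(P_{>m})$ and $g\in G$ one has, in $\widetilde{C}_n\bigl(\bigvee_m\Sigma P_{>m}\bigr)$, the identity $g\cdot\bigl(m\ast z_m-\hat 0\ast z_m\bigr)=(g\cdot m)\ast(g\cdot z_m)-\hat 0\ast(g\cdot z_m)$ with $g\cdot z_m\in\widetilde{Z}_{n-1}(P_{>g\cdot m})$; comparison with the definition of $\mu$ gives $\mu(g\cdot x)=g\cdot\mu(x)$, completing the proof.

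The main obstacle is bookkeeping rather than conceptual: one must keep precise track of which copy of $P_{>m}$ inside the wedge (and inside the chain complex $\widetilde{C}_{\ast}$) a given simplex or cycle lives in, and of how the two apices of each suspension transform --- the ``outer'' apex $\hat 0$ being $G$-fixed while the ``inner'' apices $m\in\mathcal{M}$ are permuted among themselves. Once the indexing conventions are pinned down, every step above is a routine verification against Propositions~\ref{proposition:suspension} and~\ref{proposition:wedge} together with the elementary theory of induced modules.
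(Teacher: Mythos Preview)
Your argument is correct and is the natural one: check that the $G$-action on $\bigvee_{m\in\mathcal{M}}\Sigma P_{>m}$ is well-defined, observe that the simplex-by-simplex bijection underlying Proposition~\ref{proposition:wedge}(i) visibly intertwines the two actions, invoke the system-of-imprimitivity lemma for the induced-module identification, and verify directly on generators that $\mu$ commutes with $G$.

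There is, however, nothing in the paper to compare it with. The paper states this proposition with attribution to \cite[Proposition~2.3]{SeWe1994} and gives no proof; the opening paragraph of \S\ref{subsection:extension} explicitly defers all proofs in that section to \cite{SeWe1994} and \cite{Koonin2012}. Your sketch is essentially the argument one would expect to find (and does find) in Segev--Webb. One incidental remark: you take the $G$-action on the apex vertices $m\in\mathcal{M}$ to be the natural permutation action $m\mapsto g\cdot m$, whereas the displayed formula in the paper reads $g\cdot m=m$; your version is the correct one (otherwise the map $j$ of Theorem~\ref{theorem:wedge3} could not be $G$-equivariant), and the paper's line is evidently a typo.
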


Now we describe how the wedge of suspensions construction is useful in the
case when $P$ is an extension of $Q$ by minimal elements. Set $\mathcal{M}=
P\backslash Q$, and define $j : \bigvee_{m \in \mathcal{M}} \Sigma P_{> m} \rightarrow
P_Q$ as follows. Define
\begin{alignat*}{2}
j (x) &= j_m (x)&\quad\mbox{for $x \in
P_{> m} \times \{m\}$, where $j_m$ was defined in Definition
\ref{definition:wedgeposet}},\\
j (m) &= m&\mbox{for $m \in
\mathcal{M}$}\\
j ( \hat{0}) &= \hat{0_{}}_Q .&
\end{alignat*}

\begin{theorem}{\cite[Theorem 2.4]{SeWe1994}}\label{theorem:wedge3} Suppose $P$ is an extension
  of $Q$ by minimal elements. Further, suppose that $P$ is a $G$-poset and that $Q$
  is stable under the action of $G$. Let $\mathcal{M}= P\backslash Q$. Then
\begin{itemize}
\item[(i)] $j : \bigvee_{m \in \mathcal{M}} \Sigma P_{> m} \rightarrow P_Q$
  is a $G$-homotopy equivalence.
\item[(ii)] for $n \geqslant 1$ the map
\[
\mu : \bigoplus_{m \in \mathcal{M}}
  \widetilde{H}_{n - 1} (P_{> m}) \rightarrow \widetilde{H}_n (P_Q)
  \]
  is an isomorphism of $\mathbb{Z}G$-modules, where
\[
\mu ( \sum_{m \in \mathcal{M}} [z_m]) =
  \sum_{m \in \mathcal{M}} [m \ast z_m - \hat{0}_Q \ast z_m],
\]  
where for all $m \in \mathcal{M}$, $z_m \in \widetilde{Z}_{n - 1} (P_{>
  m})$.
  \end{itemize}
\end{theorem}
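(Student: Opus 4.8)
The plan is to identify both $\Delta P_Q$ and $\Delta\bigl(\bigvee_{m\in\mathcal M}\Sigma P_{>m}\bigr)$, $G$-equivariantly, with the quotient $\Delta P/\Delta Q$, to check that $j$ realises this identification up to $G$-homotopy, and then to deduce part~(ii) by applying reduced homology and invoking Proposition~\ref{proposition:wedge2}(ii). The starting point is a combinatorial consequence of the hypothesis that $P$ is an extension of $Q$ \emph{by minimal elements}: for each $m\in\mathcal M$ one has $P_{>m}\subseteq Q$, since any $r>m$ fails to be minimal in $P$ and hence lies in $Q$. This has two uses. First, the map $j$ of the statement is then genuinely order-preserving, and it is $G$-equivariant since $G$ fixes the adjoined elements $\hat 0$ and $\hat 0_Q$ and permutes $\mathcal M$ compatibly. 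Second, $\Delta P$ is covered by $\Delta Q$ together with the subcomplexes $\Delta P_{\geqslant m}$ ($m\in\mathcal M$), each of which is the cone with apex $m$ over $\Delta P_{>m}$; moreover $\Delta P_{\geqslant m}\cap\Delta Q=\Delta P_{>m}$ and $\Delta P_{\geqslant m}\cap\Delta P_{\geqslant m'}\subseteq\Delta Q$ for $m\neq m'$, because a chain in $P$ contains at most one minimal element of $P$.

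For part~(i): by Proposition~\ref{proposition:MV}(i), $\Delta P_Q=\Delta P\cup\Delta Q_Q$ with $\Delta P\cap\Delta Q_Q=\Delta Q$; since $\Delta Q_Q$ is the simplicial cone on the $G$-fixed vertex $\hat 0_Q$, the radial contraction to the apex is $G$-equivariant, so $\Delta Q_Q$ is $G$-contractible and collapsing this $G$-CW subcomplex gives a $G$-homotopy equivalence $c\colon\Delta P_Q\to\Delta P_Q/\Delta Q_Q$, whose target may be identified with $\Delta P/\Delta Q$. Using the cover of $\Delta P$ above, collapsing $\Delta Q$ carries each cone $\Delta P_{\geqslant m}$ to $\Delta P_{\geqslant m}/\Delta P_{>m}\cong\Sigma(\Delta P_{>m})$, and since the cones meet one another and $\Delta Q$ only inside $\Delta Q$ one obtains a $G$-homeomorphism $\Delta P/\Delta Q\cong\bigvee_{m\in\mathcal M}\Sigma(\Delta P_{>m})$, the wedge point being the image of $\Delta Q$. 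On the other hand Proposition~\ref{proposition:wedge2}(i) gives a $G$-homeomorphism $\Delta\bigl(\bigvee_{m\in\mathcal M}\Sigma P_{>m}\bigr)\cong_G\bigvee_{m\in\mathcal M}\Sigma(\Delta P_{>m})$. From the explicit formula for $j$ one then checks that, transported through these identifications, $c\circ\Delta j$ becomes the self-map of $\bigvee_{m\in\mathcal M}\Sigma(\Delta P_{>m})$ that is the identity on the apex-$m$ cone of each summand and collapses the apex-$\hat 0$ cone of each summand to the wedge point (indeed $\Delta j$ sends the first cone onto $\Delta P_{\geqslant m}$ and the second into $\Delta Q_Q$). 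Collapsing one cone of a suspension is $G$-homotopic to the identity — it is induced by a suspension-coordinate reparametrisation of $[-1,1]$ fixing the endpoints, which is homotopic to the identity through the convex straight-line homotopy, $G$-equivariantly and compatibly with the $G$-permutation of the summands. Hence $c\circ\Delta j\simeq_G\id$; since $c$ is a $G$-homotopy equivalence, $\Delta j$ is one too, i.e.\ $j\colon\bigvee_{m\in\mathcal M}\Sigma P_{>m}\to P_Q$ is a $G$-homotopy equivalence.

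Part~(ii) is then formal. Applying reduced homology to the $G$-homotopy equivalence $j$ yields $\Z G$-module isomorphisms $j_*\colon\widetilde H_n\bigl(\bigvee_{m\in\mathcal M}\Sigma P_{>m}\bigr)\xrightarrow{\ \cong\ }\widetilde H_n(P_Q)$ for all $n$. For $n\geqslant 1$, Proposition~\ref{proposition:wedge2}(ii) furnishes the $\Z G$-isomorphism $\mu\colon\bigoplus_{m\in\mathcal M}\widetilde H_{n-1}(P_{>m})\to\widetilde H_n\bigl(\bigvee_{m\in\mathcal M}\Sigma P_{>m}\bigr)$ given by $\sum_m[z_m]\mapsto\sum_m[m\ast z_m-\hat 0\ast z_m]$; composing with $j_*$, and using that the simplicial map $\Delta j$ fixes each chain $m\ast z_m$ and sends $\hat 0\ast z_m$ to $\hat 0_Q\ast z_m$, identifies $j_*\circ\mu$ with the map $\mu$ of the statement, which is therefore a $\Z G$-isomorphism.

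The main obstacle I anticipate is in part~(i): ensuring that all of the collapse maps, the wedge decomposition of $\Delta P/\Delta Q$, and the homotopy $c\circ\Delta j\simeq\id$ are simultaneously $G$-equivariant, keeping in mind that $G$ permutes $\mathcal M$ and hence the wedge summands while fixing the two adjoined minimal elements — in particular, justifying equivariantly that collapsing a $G$-contractible subcomplex (namely $\Delta Q_Q$, and the apex-$\hat 0$ cone in each summand) is a $G$-homotopy equivalence, indeed one $G$-homotopic to the identity. The combinatorial core, that the cones $\Delta P_{\geqslant m}$ overlap only inside $\Delta Q$ — which is exactly where the ``by minimal elements'' hypothesis is used, and which makes the quotient split as a genuine wedge — is by comparison routine.
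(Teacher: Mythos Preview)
The paper does not give its own proof of this theorem: it is quoted from \cite[Theorem~2.4]{SeWe1994}, and the introduction to \S\ref{subsection:extension} explicitly defers all proofs in that section to \cite{SeWe1994} and \cite{Koonin2012}. So there is no in-paper argument to compare against.

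Your argument is correct and is the natural one. The key combinatorial points---that $P_{>m}\subseteq Q$ for $m\in\mathcal M$, that the cones $\Delta P_{\geqslant m}$ meet one another and $\Delta Q$ only inside $\Delta Q$, and hence that $\Delta P/\Delta Q$ splits $G$-equivariantly as the wedge $\bigvee_m(\Delta P_{\geqslant m}/\Delta P_{>m})\cong\bigvee_m\Sigma(\Delta P_{>m})$---are exactly where the ``extension by minimal elements'' hypothesis is used, and you identify this correctly. The reduction of (i) to the fact that collapsing a $G$-contractible $G$-CW subcomplex (first $\Delta Q_Q$, then the $\hat 0$-cone in each summand) is a $G$-homotopy equivalence is standard and legitimate for simplicial $G$-complexes. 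One small wording issue: when you say $c\circ\Delta j$ is ``the identity on the apex-$m$ cone'', bear in mind that in the target identification $\Sigma(\Delta P_{>m})=\Delta P_{\geqslant m}/\Delta P_{>m}$ the apex-$m$ cone is the \emph{whole} suspension; what you are really asserting is that $c\circ\Delta j$ on each summand is the quotient map $\Sigma X\to\Sigma X/C_{\hat 0}X\cong\Sigma X$, which is indeed $G$-homotopic to the identity via the suspension-coordinate homotopy you describe. Part~(ii) then follows formally from (i) together with Proposition~\ref{proposition:wedge2}(ii), and your check of the explicit formula via $\Delta j(m\ast z_m)=m\ast z_m$, $\Delta j(\hat 0\ast z_m)=\hat 0_Q\ast z_m$ is correct.
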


\section{Extensions for $\mathcal{S}_{\zeta}^V (\gamma
G)$}\label{subsection:extension2}

Adopt the notation of \S \ref{subsection:taxonomy}.

  Note that if we set $P = \left.\mathcal{S}'\right._{\zeta}^{
  V} (\gamma G)$ and $Q =\left.\mathcal{T}'\right._\zeta^V (\gamma G)$ then
  $\left.\mathcal{S}'\right._\zeta^V (\gamma G)$ is an extension of
  $\left.\mathcal{T}'\right._\zeta^V (\gamma G)$ by minimal elements, and the resulting
  poset $P_Q =\left.\mathcal{U}'\right._\zeta^V (\gamma G)$. This construction
explains the motivation for the definition.

It is well known (see \cite[Corollary 5.7]{OrSo1980}) that in the case $\zeta
= 1, \gamma = \tmop{Id}$, the posets $\widetilde{\mathcal{S}}_1^V (G) =
\widetilde{\mathcal{L}(\mathcal{A}(G))}$ play an important role in the theory
of hyperplane complements and in the representation theory of unitary
reflection groups. The poset $\left.\mathcal{U}'\right._\zeta^V (\gamma G)$ is a
natural generalisation of $\widetilde{\mathcal{S}}_1^V (G) =
\widetilde{\mathcal{L}(\mathcal{A}(G)}$ by virtue of the following
proposition:

\begin{proposition}\label{proposition:US}
  Let $V$ be a finite dimensional complex vector space,
  and $G$ a unitary reflection group in $V$. Then $\left.\mathcal{U}'\right.^V_1 (G) = \Sigma
  \widetilde{\mathcal{S}}_1^V (G)$.
  \end{proposition}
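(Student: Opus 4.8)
The plan is to identify the two posets $\left.\mathcal{U}'\right.^V_1 (G)$ and $\Sigma \widetilde{\mathcal{S}}_1^V (G)$ directly, by comparing their underlying sets and order relations, working from the definitions in \S\ref{subsection:taxonomy}. First I would record what each side is concretely. By Definition \ref{definition:U'_v(zeta)}, $\left.\mathcal{U}'\right.^V_1 (G)$ consists of the elements of $\left.\mathcal{S}'\right._1^V (G)$ together with one new element $\hat{0}_S$; and $\left.\mathcal{S}'\right._1^V (G) = \mathcal{S}_1^V (G) \setminus \{\hat{1}\}$. In the case $\zeta = 1$, $\gamma = \tmop{Id}$, the poset $\mathcal{S}_1^V (G)$ is the intersection lattice $\mathcal{L}(\mathcal{A}(G))$ (by \cite[Theorem 6.27]{OrTe1992}), whose unique maximal element $\hat{1}$ is the zero subspace and whose unique minimal element $\hat{0}$ is $V$ itself; so this lattice does have a $\hat{0}$. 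Hence $\left.\mathcal{S}'\right._1^V (G)$ has the minimal element $\hat{0} = V$, and the maximal eigenspaces (the rank-$0$ elements of $\left.\mathcal{S}'\right._1^V (G)$, i.e. those just below the removed $\hat{1}$) are exactly the reflecting hyperplanes; removing these rank-$0$ elements gives $\left.\mathcal{T}'\right._1^V (G) = \widetilde{\mathcal{L}(\mathcal{A}(G))}$, which the paper has already identified with $\widetilde{\mathcal{S}}_1^V (G)$. On the other side, by Definition \ref{definition:suspension}, $\Sigma \widetilde{\mathcal{S}}_1^V (G)$ consists of the elements of $\widetilde{\mathcal{S}}_1^V (G)$ together with two new elements $\hat{0}_R$ and $\hat{0}_R'$.

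The key step is then to exhibit the bijection on underlying sets and check it is an order isomorphism. I would send $\hat{0}_S \mapsto \hat{0}_R$, the minimal element $V \in \left.\mathcal{S}'\right._1^V (G)$ to $\hat{0}_R'$, and fix every other element (all of which lie in $\widetilde{\mathcal{S}}_1^V (G) = \left.\mathcal{T}'\right._1^V (G)$) by the identity. This is a bijection because $\left.\mathcal{U}'\right.^V_1 (G)$ has underlying set $\left.\mathcal{T}'\right._1^V (G) \sqcup \{V\} \sqcup \{\hat{0}_S\}$ while $\Sigma \widetilde{\mathcal{S}}_1^V (G)$ has underlying set $\left.\mathcal{T}'\right._1^V (G) \sqcup \{\hat{0}_R'\} \sqcup \{\hat{0}_R\}$. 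For the order relation I would go through the cases in Definition \ref{definition:suspension} and match them against Definition \ref{definition:U'_v(zeta)}: in $\Sigma \widetilde{\mathcal{S}}_1^V (G)$, $\hat{0}_R < y$ iff $y \neq \hat{0}_R'$, which under the bijection says $\hat{0}_S < y$ for every $y \in \left.\mathcal{T}'\right._1^V (G) \cup \{V\}$; but in $\left.\mathcal{U}'\right.^V_1 (G)$ the element $\hat{0}_S$ is declared below exactly $\left.\mathcal{T}'\right._1^V (G)$ — so I need that $V$ (the minimal element of $\left.\mathcal{S}'\right._1^V (G)$, hence of $\left.\mathcal{U}'\right.^V_1 (G)$) satisfies $\hat{0}_S < V$ as well, which is automatic since $V < x$ for all $x \in \left.\mathcal{T}'\right._1^V (G)$ and $\hat{0}_S < x$ for those $x$, combined with transitivity — wait, transitivity gives $\hat{0}_S < V$ only if $\hat{0}_S < x < V$, which is false. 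So in fact $\hat{0}_S$ and $V$ are incomparable in $\left.\mathcal{U}'\right.^V_1 (G)$, whereas $\hat{0}_R$ and $\hat{0}_R'$ are incomparable in $\Sigma \widetilde{\mathcal{S}}_1^V (G)$: these match. Similarly $\hat{0}_R' < y$ iff $y \neq \hat{0}_R$, i.e. $V < y$ for all $y \in \left.\mathcal{T}'\right._1^V (G)$, which is exactly the order relation already holding in $\left.\mathcal{S}'\right._1^V (G)$ since $V = \hat{0}$ there; and relations within $\left.\mathcal{T}'\right._1^V (G) = \widetilde{\mathcal{S}}_1^V (G)$ are the same on both sides by construction. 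This exhausts all cases, so the map is an isomorphism of posets.

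Finally, I would note that the isomorphism is $G$-equivariant: $G$ acts trivially on $\hat{0}_S$ (by the convention fixed in \S\ref{subsection:taxonomy}), trivially on $\hat{0}_R$ and $\hat{0}_R'$ (by the convention in the suspension construction), and fixes $V$ since $V$ is $G$-stable as a subspace; on all remaining elements the action is the restriction of the $G$-action on $\mathcal{S}_1^V (G)$ from Remark \ref{remark:action}, which is identical on both sides. The main obstacle, such as it is, is purely bookkeeping: one must be careful that $\mathcal{S}_1^V (G)$ genuinely has a $\hat{0}$ (so that $\left.\mathcal{S}'\right._1^V (G)$ retains it and the suspension picks up both cone points correctly) and that the rank-$0$ elements removed to form $\left.\mathcal{T}'\right._1^V (G)$ are precisely the hyperplanes and not, say, $V$ itself — i.e. one must not conflate "maximal eigenspace" with "top of the lattice." Once the dictionary $\hat{0}_S \leftrightarrow \hat{0}_R$, $V \leftrightarrow \hat{0}_R'$ is pinned down, the verification of the order relation is a short case check and the result follows.
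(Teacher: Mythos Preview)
Your overall strategy---unpack the definitions, write down an explicit bijection $\hat{0}_S \leftrightarrow \hat{0}_R$, $V \leftrightarrow \hat{0}_R'$, identity on the rest, and check the order relations case by case---is exactly what the paper's one-line proof is gesturing at, and your verification of the order relation and of $G$-equivariance is fine.

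However, there is a genuine error in your reading of the definitions that makes the write-up internally inconsistent. You say that the maximal eigenspaces (the rank-$0$ elements of $\left.\mathcal{S}'\right._1^V(G)$) are ``those just below the removed $\hat{1}$'' and are ``exactly the reflecting hyperplanes.'' This is backwards. The poset is ordered by \emph{reverse} inclusion, so rank-$0$ elements are the \emph{largest} subspaces, not the smallest; the unique rank-$0$ element of $\left.\mathcal{S}'\right._1^V(G)$ is $V$ itself (the $1$-eigenspace of the identity). The hyperplanes sit at rank $1$, and the elements just below the removed $\hat{1}$ are the coatoms, not the rank-$0$ elements. Your closing caveat---that the rank-$0$ elements ``are precisely the hyperplanes and not, say, $V$ itself''---states exactly the opposite of the truth.

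This matters because if the maximal eigenspaces really were the hyperplanes, then $\left.\mathcal{T}'\right._1^V(G)$ would \emph{not} equal $\widetilde{\mathcal{L}(\mathcal{A}(G))}$, and your decomposition $\left.\mathcal{U}'\right._1^V(G) = \left.\mathcal{T}'\right._1^V(G) \sqcup \{V\} \sqcup \{\hat{0}_S\}$ would be wrong. Your argument survives only because you simultaneously invoke the paper's stated identification $\left.\mathcal{T}'\right._1^V(G) = \widetilde{\mathcal{S}}_1^V(G)$, which is correct for the right reason (removing the single element $V$), not the one you give. Once you replace ``hyperplanes'' by ``the single element $V$'' throughout, the proof is correct and matches the paper's.
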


  \begin{proof}
 This follows from Definition \ref{definition:suspension}, noting
    that $\mathcal{S}_1^V (G)$ always has a unique minimal element $V$.
  \end{proof}

\begin{corollary}
  Let $V$ be a finite dimensional complex vector space of dimension $n$, and $G$ a
  unitary reflection group acting on $V$. Then $\widetilde{H}_{n - 1}
  (\left.\mathcal{U}'\right._1^V (G)) \simeq_G \widetilde{H}_{n - 2} (
  \widetilde{\mathcal{S}}_1^V (G)) .$
\end{corollary}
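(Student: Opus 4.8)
The plan is to derive this corollary immediately from the two results that precede it. By Proposition \ref{proposition:US}, the poset $\left.\mathcal{U}'\right._1^V (G)$ is equal to $\Sigma \widetilde{\mathcal{S}}_1^V (G)$, the suspension of $\widetilde{\mathcal{S}}_1^V (G)$. So the question reduces to identifying $\widetilde{H}_{n-1}$ of the suspension of a poset with $\widetilde{H}_{n-2}$ of the poset itself, equivariantly. This is exactly what Proposition \ref{proposition:suspension}(iii) provides: for a $G$-poset $R$, the map $[z] \mapsto [\Sigma(z)]$ is an isomorphism $\widetilde{H}_{m-1}(R) \to \widetilde{H}_m(\Sigma R)$ of $\mathbb{Z}G$-modules.

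First I would apply Proposition \ref{proposition:US} to rewrite the left-hand side as $\widetilde{H}_{n-1}(\Sigma \widetilde{\mathcal{S}}_1^V(G))$. Then I would invoke Proposition \ref{proposition:suspension}(iii) with $R = \widetilde{\mathcal{S}}_1^V(G)$ and with the degree parameter chosen so that $m = n-1$, i.e. $m-1 = n-2$; this gives a $\mathbb{Z}G$-module isomorphism $\widetilde{H}_{n-2}(\widetilde{\mathcal{S}}_1^V(G)) \cong_G \widetilde{H}_{n-1}(\Sigma \widetilde{\mathcal{S}}_1^V(G))$. Chaining the two identifications yields $\widetilde{H}_{n-1}(\left.\mathcal{U}'\right._1^V(G)) \simeq_G \widetilde{H}_{n-2}(\widetilde{\mathcal{S}}_1^V(G))$, which is the claim. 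One small point worth a sentence is that Proposition \ref{proposition:suspension}(iii) is stated for $n \geqslant 1$ in the cited form, so strictly one should note $n-1 \geqslant 1$, i.e. $n \geqslant 2$; the case $n \leqslant 1$ is degenerate (the relevant posets are empty or a point) and can be dispatched by hand or simply excluded as the hyperplane arrangement is trivial.

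I do not expect any real obstacle here — the corollary is essentially a bookkeeping consequence of the suspension isomorphism together with the structural identification in Proposition \ref{proposition:US}. The only thing requiring a moment's care is matching the homological degrees correctly (that the suspension raises degree by one, and that $\widetilde{\mathcal{S}}_1^V(G) = \widetilde{\mathcal{L}(\mathcal{A}(G))}$ has top reduced homology in degree $n-2$, consistent with the lattice of intersections of hyperplanes in an $n$-dimensional space having length $n$ between $\hat{0}$ and $\hat{1}$). Making sure the isomorphism of Proposition \ref{proposition:suspension}(iii) is genuinely $G$-equivariant is already guaranteed by the cited statement, since the $G$-action on $\hat{0}_R$ and $\hat{0}_R'$ is trivial and $G$ permutes simplices compatibly with the $*$ operation. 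Hence the proof is a two-line application of the two preceding results.
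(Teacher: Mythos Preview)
Your proposal is correct and follows exactly the same approach as the paper's proof: apply Proposition~\ref{proposition:US} to identify $\left.\mathcal{U}'\right._1^V(G)$ with $\Sigma\widetilde{\mathcal{S}}_1^V(G)$, then invoke the equivariant suspension isomorphism of Proposition~\ref{proposition:suspension}(iii). The paper's proof is the one-line version of what you wrote.
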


\begin{proof}
  This follows immediately from Proposition \ref{proposition:suspension}(iii)
  and Proposition \ref{proposition:US}. 
\end{proof}

For general $\gamma$ and $\zeta$, we have the following theorem:

\begin{theorem}\label{theorem:webb}
Suppose $\gamma G$ is a unitary reflection coset in $V
  =\mathbb{C}^n$. Let $\zeta \in \mathbb{C}^n$ and
  set $\mathcal{M}= \left.\mathcal{S}'\right._\zeta^V
  (\gamma G)  \backslash \left.\mathcal{T}'\right._\zeta^V (\gamma G)$. Thus $\mathcal{M}$ is the set of maximal eigenspaces of
  $\left.\mathcal{S}'\right._\zeta^V (\gamma G)$. Then
\begin{itemize}
\item[(i)] There is a long exact sequence of $\mathbb{Z}G$-modules
\[
\cdots \rightarrow \widetilde{H}_n
  (\left.\mathcal{S}'\right._\zeta^V (\gamma G) ) \xrightarrow{\iota_*} \mathcal{S}_{\zeta}^V(\gamma G) 
  \widetilde{H}_{_n} ( \left.\mathcal{T}'\right._\zeta^V (\gamma G) )
  \xrightarrow{\kappa_*} \widetilde{H}_n ( \left.\mathcal{U}'\right._\zeta^V (\gamma G) )
  \xrightarrow{r} \widetilde{H}_{n - 1} (\left.\mathcal{S}'\right._\zeta^V (\gamma G))
  \rightarrow \cdots
  \]
where 
\[\iota_{\ast} :\widetilde{H}_n (
  \left.\mathcal{S}'\right._\zeta^V (\gamma G)) \rightarrow \widetilde{H}_{_n}
  (\left.\mathcal{T}'\right._\zeta^V (\gamma G) )\] and  \[\kappa_{\ast} : \widetilde{H}_{_n}
  (\left.\mathcal{T}'\right._\zeta^V (\gamma G) ) \rightarrow \widetilde{H}_n (
  \left.\mathcal{U}'\right._\zeta^V (\gamma G))\]
  are the maps on homology
  induced by the obvious inclusion maps, and $r$ is the map defined in Proposition \ref{proposition:MV} with $P={\mathcal{S}'}_\zeta^V (\gamma G)$  and  $Q={\mathcal{T}'}_\zeta^V (\gamma G)$ 
\item[(ii)] $\left.\mathcal{U}'\right._\zeta^V (\gamma G) \simeq \bigvee_{m \in \mathcal{M}}
\Sigma ( \left.\mathcal{S}'\right._\zeta^V (\gamma G) _{> m})$
\item[(iii)] For all $n \geqslant 0$,
\begin{align*}
\widetilde{H}_n (\left.\mathcal{U}'\right._\zeta^V (\gamma G) ) &\simeq_G
\bigoplus_{m \in \mathcal{M}} \widetilde{H}_{n - 1} (\left.\mathcal{S}'\right._\zeta^V
(\gamma G) _{> m})\\
&\simeq_G \bigoplus_{m \in
[G \backslash \mathcal{M}]} \tmop{Ind}_{G_m}^G \widetilde{H}_{n - 1} ( \left.\mathcal{S}'\right._\zeta^V (\gamma G))_{> m}).
\end{align*}
\end{itemize}
\end{theorem}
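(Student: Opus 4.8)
The plan is to deduce all three parts of Theorem~\ref{theorem:webb} by applying the general machinery of Section~\ref{subsection:extension} to the specific pair $P = \left.\mathcal{S}'\right._\zeta^V(\gamma G)$, $Q = \left.\mathcal{T}'\right._\zeta^V(\gamma G)$. The one genuine piece of verification is the observation, already recorded at the start of Section~\ref{subsection:extension2}, that $P$ is an extension of $Q$ by minimal elements with $P_Q = \left.\mathcal{U}'\right._\zeta^V(\gamma G)$. So first I would spell this out carefully: $Q$ is an upper order ideal of $P$ because it is obtained from $P$ by deleting elements of rank $0$ (the maximal eigenspaces), and deleting maximal elements of a poset always leaves an upper order ideal; moreover every $p \in P$ lies below some maximal eigenspace, so $Q_{\geqslant p} \neq \emptyset$; and every element of $\mathcal{M} = P \setminus Q$ is by construction a maximal, hence minimal-in-$P\setminus Q$... more precisely a minimal element of $P$ only in the degenerate sense---here I should be a little careful and note that what is actually needed is that elements of $P\setminus Q$ are minimal in $P$, which holds because a maximal eigenspace $m$ with $x <_P m$ would force $x$ to have strictly larger dimension, but then $x$ would not be a maximal eigenspace either, so $x \in Q$; wait, that shows $m$ is \emph{not} below anything, i.e. $m$ is maximal. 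The correct statement is that $\left.\mathcal{S}'\right._\zeta^V(\gamma G)$ being ordered by \emph{reverse} inclusion makes the maximal eigenspaces the minimal elements of the poset, so indeed $P\setminus Q$ consists of minimal elements of $P$. Then $P_Q$ adjoins a single new bottom element $\hat{0}_Q$ lying below exactly the elements of $Q = \left.\mathcal{T}'\right._\zeta^V(\gamma G)$, which is precisely Definition~\ref{definition:U'_v(zeta)} of $\left.\mathcal{U}'\right._\zeta^V(\gamma G)$, with $\hat{0}_Q = \hat{0}_S$.

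Once this identification is in place, the three parts follow mechanically. For~(i), I would invoke Proposition~\ref{proposition:MV}(ii) with this $P$ and $Q$: since $P$ is a $G$-poset (as a $G$-stable subposet of $\mathcal{S}_\zeta^V(\gamma G)$, per Section~\ref{subsection:taxonomy}) and $Q$ is $G$-stable, the Mayer--Vietoris sequence is one of $\mathbb{Z}G$-modules, and substituting the names $\left.\mathcal{S}'\right._\zeta^V(\gamma G)$, $\left.\mathcal{T}'\right._\zeta^V(\gamma G)$, $\left.\mathcal{U}'\right._\zeta^V(\gamma G)$ for $P$, $Q$, $P_Q$ gives exactly the stated sequence, with $\iota_*$, $\kappa_*$ the inclusion-induced maps and $r$ the connecting map of that proposition. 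For~(ii), I would apply Theorem~\ref{theorem:wedge3}(i) with $\mathcal{M} = P \setminus Q$; this yields a $G$-homotopy equivalence $j : \bigvee_{m \in \mathcal{M}} \Sigma P_{>m} \to P_Q$, and since $P_{>m} = \left.\mathcal{S}'\right._\zeta^V(\gamma G)_{>m}$ and $P_Q = \left.\mathcal{U}'\right._\zeta^V(\gamma G)$, this is the asserted homotopy equivalence. For~(iii), the first isomorphism is Theorem~\ref{theorem:wedge3}(ii) (the map $\mu$), and the second is the induced-module decomposition of Proposition~\ref{proposition:wedge2}(ii), both specialised to the present $P$ and $Q$. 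I should also note that the theorem asserts~(iii) for all $n \geqslant 0$, whereas Theorem~\ref{theorem:wedge3}(ii) is stated for $n \geqslant 1$; the case $n = 0$ needs a separate sentence---for $n=0$ both sides involve $\widetilde{H}_{-1}$ of the various complexes, and one checks directly (or cites that $\left.\mathcal{U}'\right._\zeta^V(\gamma G)$ has a minimal element $\hat{0}_S$, hence is contractible or at least connected when nonempty, so $\widetilde{H}_0 = 0$, matching the right-hand side) that the isomorphism persists; alternatively, if $n=0$ is meant to be covered by convention, a brief remark suffices.

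The only real subtlety---the step I would flag as requiring care rather than being purely formal---is confirming that $P \setminus Q$ consists of minimal elements of $P$ in the sense demanded by Definition~\ref{definition:extension}, and relatedly that $Q$ is genuinely an \emph{upper} order ideal (not merely a subposet) of $P$. Both hinge on the order being \emph{reverse} inclusion of eigenspaces, so that ``maximal eigenspace'' means ``minimal element of the poset'' and deleting the rank-$0$ elements leaves everything above them, i.e. an up-set. This is the point where an unwary reader could get the variance backwards, so I would state it explicitly with a one-line justification: if $W$ is a maximal eigenspace and $W' \supsetneq$-contains... rather, if $W'$ is an eigenspace with $W' <_P W$ then $W' \subsetneq W$... no: $W' <_P W$ means $W' \supsetneq W$ as subspaces, so $W'$ strictly contains $W$; but then $W$ was not minimal among eigenspaces under reverse inclusion unless no such $W'$ exists, which is exactly maximality of $W$ as an eigenspace. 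Hence the maximal eigenspaces---the elements of $\mathcal{M}$---are precisely the minimal elements of $\left.\mathcal{S}'\right._\zeta^V(\gamma G)$, every other element lies strictly above one of them, and $\left.\mathcal{T}'\right._\zeta^V(\gamma G)$ is the complement of this set of minimal elements, hence an upper order ideal, completing the verification that the hypotheses of Proposition~\ref{proposition:MV}, Proposition~\ref{proposition:wedge2} and Theorem~\ref{theorem:wedge3} are met. Everything else is substitution of notation.
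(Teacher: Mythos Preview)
Your approach is exactly the paper's: the paper's proof is a three-line citation, deriving (i) from Proposition~\ref{proposition:MV}(ii), (ii) from Theorem~\ref{theorem:wedge3}(i), and (iii) from Proposition~\ref{proposition:wedge2}(ii), relying on the observation (stated without further argument at the start of \S\ref{subsection:extension2}) that $P=\left.\mathcal{S}'\right._\zeta^V(\gamma G)$ is an extension of $Q=\left.\mathcal{T}'\right._\zeta^V(\gamma G)$ by minimal elements with $P_Q=\left.\mathcal{U}'\right._\zeta^V(\gamma G)$. Your added verification of the extension hypotheses and the remark on $n=0$ go beyond what the paper writes; note, though, that your sentence ``every $p\in P$ lies below some maximal eigenspace, so $Q_{\geqslant p}\neq\emptyset$'' has the variance backwards---what is needed is that every maximal eigenspace $m$ (a minimal element of $P$) has some element of $Q$ \emph{above} it, i.e.\ a strictly smaller eigenspace other than $\hat{1}$---so that line should be cleaned up or simply replaced by a reference to the paper's assertion.
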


\begin{proof}
 Part (i) now follows
  directly from Proposition \ref{proposition:MV}(ii), part (ii) from Theorem
  \ref{theorem:wedge3}(i), and part (iii) from Proposition
  \ref{proposition:wedge2}(ii). 
\end{proof}

\begin{corollary}\label{corollary:webb}
 Suppose $\gamma G$ is a unitary reflection coset acting on $V =
  \mathbb{C}^n .$ Let $\zeta$ be a complex $m$-th root of unity, and suppose
  $E$ is a maximal $\zeta$-eigepnspace for $\gamma G$. Let $N (E)$ and $C (E)$ be the
  normaliser and centraliser of $E$, respectively. Let
  $\left.\mathcal{U}'\right._\zeta^V (\gamma G)$ be defined as in Definition
  \ref{definition:U'_v(zeta)}. Then
\begin{itemize}
\item[(i)] The poset $\left.\mathcal{U}'\right._\zeta^V (\gamma G)$
  is homotopy equivalent to a bouquet (wedge) of spheres of dimension $l (
  \left.\mathcal{U}'\right._\zeta^V (\gamma G))$. The number of spheres is equal to
\[
\frac{1}{\left| C(E) \right|}\Bigl( \prod_{d_i : m \nmid d_i} d_i\Bigr) \Bigl( \prod_{{d'_i}^{\ast}}
({d'_i}^{\ast} + 1)\Bigr),
\]
where the $\left.d'\right._i^{\ast}$ are the codegrees of $N (E) / C (E)$.
\item[(ii)] When $m$ is a regular number for $\gamma G$, this number is equal to
\[
\Bigl( \prod_{d_i : m \nmid d_i} d_i\Bigr) \Bigl(
\prod_{d_i^{\ast} : m \mid d_i^{\ast}} (d_i^{\ast} + 1)\Bigr).
\]
\item[(iii)] $\widetilde{H}_{\tmop{top}} ( \left.\mathcal{U}'\right._\zeta^V (\gamma
G))\simeq_G  \tmop{Ind}_{N (E)}^G \widetilde{H}_{\tmop{top}} ( \widetilde{\mathcal{S}}^E_1 (\gamma G))
$.
\end{itemize}
\end{corollary}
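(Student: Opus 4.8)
The plan is to deduce Corollary~\ref{corollary:webb} from Theorem~\ref{theorem:webb} together with the known structure theory of the posets $\left.\mathcal{S}'\right._\zeta^V(\gamma G)$ and the eigenspace theory of Springer--Lehrer. First I would establish part~(iii), since parts~(i) and~(ii) will follow from it by computing dimensions. By Theorem~\ref{theorem:webb}(iii), $\widetilde{H}_{\tmop{top}}(\left.\mathcal{U}'\right._\zeta^V(\gamma G)) \simeq_G \bigoplus_{m\in[G\backslash\mathcal{M}]}\tmop{Ind}_{G_m}^G \widetilde{H}_{\tmop{top}-1}(\left.\mathcal{S}'\right._\zeta^V(\gamma G)_{>m})$, where $\mathcal{M}$ is the set of maximal $\zeta$-eigenspaces. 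The key point is that $G$ acts transitively on $\mathcal{M}$: this is precisely (a consequence of) Springer's theorem, that all maximal $\zeta$-eigenspaces of $\gamma G$ are conjugate under $G$. Hence the direct sum over $[G\backslash\mathcal{M}]$ collapses to a single term $\tmop{Ind}_{G_E}^G\widetilde{H}_{\tmop{top}-1}(\left.\mathcal{S}'\right._\zeta^V(\gamma G)_{>E})$ for a fixed maximal eigenspace $E$, and the stabiliser $G_E$ is exactly the normaliser $N(E)$ (the subgroup of $G$ fixing $E$ setwise).

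Next I would identify the local poset $\left.\mathcal{S}'\right._\zeta^V(\gamma G)_{>E}$ with a poset of the same type attached to $E$. The elements above $E$ in $\left.\mathcal{S}'\right._\zeta^V(\gamma G)$ are the $\zeta$-eigenspaces $V(\gamma x,\zeta)$ strictly contained in $E$; by the eigenspace theory of Lehrer--Springer, the relevant operators restrict to $E$ and one obtains an order-isomorphism of $N(E)$-posets $\left.\mathcal{S}'\right._\zeta^V(\gamma G)_{>E} \cong \widetilde{\mathcal{S}}^E_1(\gamma G)$ — here passing to the action on $E$ turns the $\zeta$-eigenspace condition into a $1$-eigenspace (fixed-point) condition for the induced reflection coset on $E$, and removing $E$ itself (the top, now a $\hat 1$) together with the removal already built into $\left.\mathcal{S}'\right.$ yields the tilde version. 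This identification is $N(E)$-equivariant because $N(E)$ is exactly the group acting on $E$. Combining with Proposition~\ref{proposition:suspension}(iii) (the suspension shifts homology by one degree, matching the $n-1$ in Theorem~\ref{theorem:webb}(iii)) gives part~(iii).

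For parts~(i) and~(ii) I would then take dimensions. That $\left.\mathcal{U}'\right._\zeta^V(\gamma G)$ is a wedge of spheres of dimension $l(\left.\mathcal{U}'\right._\zeta^V(\gamma G))$ follows from Theorem~\ref{theorem:webb}(ii) together with the Cohen--Macaulay/shellability results on $\widetilde{\mathcal{S}}$ from \cite{Koonin2012-5} (or directly: each suspension of a Cohen--Macaulay poset is a wedge of top spheres, and a wedge of wedges of equidimensional spheres is again such). The number of spheres is $\dim\widetilde{H}_{\tmop{top}}$, which by part~(iii) equals $[G:N(E)]\cdot \dim\widetilde{H}_{\tmop{top}}(\widetilde{\mathcal{S}}^E_1(\gamma G))$. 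Now $[G:N(E)] = |\mathcal{M}| = |G|/|N(E)|$, and by the Lehrer--Springer theory the dimension of the top homology of $\widetilde{\mathcal{S}}^E_1(\gamma G)$ — the reduced Euler characteristic up to sign, equivalently the Orlik--Solomon/Möbius count for the fixed-point lattice — is governed by the degrees and codegrees: it equals $|C(E)|^{-1}\prod_{d'_i{}^{\ast}}(d'_i{}^{\ast}+1)$ in terms of the codegrees of $N(E)/C(E)$, while the count of maximal eigenspaces contributes $|G|/|N(E)| = \bigl(\prod_{d_i: m\nmid d_i} d_i\bigr)\cdot|C(E)|/|N(E)|$ via Springer's formula for the number of $\zeta$-regular-type eigenspaces. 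Assembling these (the $|C(E)|$ factors recombine) gives the displayed formula in~(i); specialising to the regular case, where $N(E)$ is itself a reflection group on $E$ with degrees/codegrees read off from those $d_i,d_i^{\ast}$ divisible by $m$, gives~(ii).

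The main obstacle I expect is the second step: setting up the $N(E)$-equivariant isomorphism $\left.\mathcal{S}'\right._\zeta^V(\gamma G)_{>E}\cong\widetilde{\mathcal{S}}^E_1(\gamma G)$ carefully, i.e.\ checking that restriction to $E$ really does carry $\zeta$-eigenspaces of elements of $\gamma G$ contained in $E$ bijectively and order-isomorphically onto $1$-eigenspaces of the induced coset on $E$, and that the induced coset on $E$ is genuinely the reflection coset whose invariant theory (degrees, codegrees) is computed by Springer--Lehrer. The bookkeeping translating between $|G|/|N(E)|$, the product of non-$m$-divisible degrees, and $|C(E)|$ — essentially repackaging Springer's regularity theorems — is the other place where care is needed, but it is known input rather than a genuine difficulty.
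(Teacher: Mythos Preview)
Your approach is correct and essentially the same as the paper's; the only structural difference is ordering --- the paper proves (i) first, directly from Theorem~\ref{theorem:webb}(ii), and (iii) last, whereas you reverse this and read (i) off as a dimension count from (iii). One slip to fix in your count: the factor $|C(E)|^{-1}$ does \emph{not} appear in $\dim\widetilde{H}_{\tmop{top}}(\widetilde{\mathcal{S}}^E_1)$ --- that dimension is simply the product $\prod(d'_i{}^{\ast}+1)$ over the codegrees of $N(E)/C(E)$ --- but rather in the eigenspace count
\[
|G|/|N(E)| \;=\; \frac{|G|/|C(E)|}{|N(E)|/|C(E)|} \;=\; \frac{1}{|C(E)|}\prod_{m\nmid d_i} d_i,
\]
using that the degrees of $N(E)/C(E)$ are exactly those $d_i$ with $m\mid d_i$. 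So the two factors don't ``recombine''; the $|C(E)|^{-1}$ lives entirely on the index side. The identification $\left.\mathcal{S}'\right._\zeta^V(\gamma G)_{>E}\cong\widetilde{\mathcal{S}}_1^E(N(E)/C(E))$ that you flag as the main obstacle is exactly what the paper imports from \cite[Theorem~3.1]{Koonin2012-5}.
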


\begin{proof}
  For (i), we use (ii) of Theorem \ref{theorem:webb}. We have shown in
  \cite[THeorem 3.1]{Koonin2012-5} that $\mathcal{S}_{\zeta}^V (\gamma
  G)_{\geqslant m}  \cong \mathcal{S}_1^E (N (E) / C (E))$, and
  so $\left.\mathcal{S}'\right._\zeta^V (\gamma G)_{> m} \cong \widetilde{\mathcal{S}}_1^E
  (N (E) / C (E))$. It is known that the latter is homotopy equivalent to a
  bouquet of spheres in dimension $l (\left.\mathcal{S}'\right._\zeta^V (\gamma G)) - 1$,
  and that the number of such spheres is equal to the product of the
  coexponents of $N (E) / C (E)$.
  Hence $\Sigma (\left.\mathcal{S}'\right._\zeta^V (\gamma G)_{> m})$ is homotopy
  equivalent to the same number of spheres, but in dimension $l
  (\left.\mathcal{S}'\right._\zeta^V (\gamma G)) = l ( \left.\mathcal{U}\right.'^V_{\zeta}
  (\gamma G))$. To complete the proof of (i) it therefore suffices to count
  the number of maximal eigenspaces in $\gamma G.$ Recall that $G$ acts
  transitively on the set of maximal eigenspaces of $\gamma G$ (see \cite[Theorem 12.19]{LeTa2009}). The stabiliser of a maximal eigenspace $E$
  is $N \assign N (E)$. Hence the number of maximal eigenspaces is
\[
\frac{\left| G \right|}{\left| N(E) \right|}= \frac{\left|G\right|/\left|C(E)\right|}{\left|N(E)\right|/\left|C(E)\right|} = \frac{1}{\left| C(E) \right|} \Bigl(
\prod_{d_i : m \nmid d_i} d_i\Bigr)
 \]

  since it is known (\cite[Corollary 11.17]{LeTa2009}) that the
  degrees of $N / C$ are precisely those degrees of $G$ which are divisible by
  $m$, and that the order of a unitary reflection group is equal to the
  product of its degrees (\cite[Theorem 2.4]{Springer1974}). The
  statement in (i) now follows.

  For (ii), note that by \cite[Lemma 11.22]{LeTa2009}, $m$ is regular for $\gamma
  G$ precisely when $C =\{1\}$. Note that this lemma is stated for
  reflection groups, but applies equally to reflection cosets. Furthermore,
  in this case the codegrees of $N / C$ are precisely those codegrees of $G$
  which are divisible by $m$ (see \cite[Theorem 11.39]{LeTa2009}).
  
To prove (iii) we use Theorem \ref{theorem:webb}(iii). We need only
  note again that the maximal eigenspaces are all conjugate under the action of $G$
  (\cite[Theorem 12.19]{LeTa2009}), so that there is only one term
  in the direct sum.
\end{proof}

\begin{remark}
  This corollary places the well-known representation of $G$ on the top homology of the lattice of interesting hyperlanes (see \cite{OrSo1980}) into a natural family of representations,
  depending on $m$, the order of $\zeta$.  This representation is the case $\gamma = \tmop{Id}$ and $\zeta = 1$.  One of the advantages of working
  with $\left.\mathcal{U}'\right._{\zeta}^V (\gamma G)$ rather than $\left.\mathcal{S}'\right._\zeta^V
  (\gamma G)$ is that the latter may or may not have a unique minimal element. Since it is necessary to remove any unique minimal element before computing
  homology, the posets $\left.\mathcal{S}'\right._\zeta^V (\gamma G)$ must be treated in
  a non-uniform manner. By contrast, the construction of
  $\left.\mathcal{U}'\right._{\zeta}^V (\gamma G)$ is exactly the same whether or not
  $\left.\mathcal{S}'\right._\zeta^V (\gamma G)$ has a unique minimal element.
\end{remark}

\begin{example}
  Consider the case $G = E_8 = G_{37}$, $\gamma = \tmop{Id}$, $\zeta$ a primitive
  3rd root of unity. The degrees of $G$ are 2, 8, 12, 14, 18, 20, 24, 30,
  and the corresponding codegrees are 0, 6, 10, 12, 16, 18, 22, 28 (see for
  example \cite[Table D.3, p.275]{LeTa2009}. Suppose $E$ is a maximal eigenspace
  among $\{V (g, \zeta) \mid g \in E_8 \}$. By \cite[Corollary 11.17]{LeTa2009}, the degrees of $N (E) / C (E)$ are
  precisely the degrees of $G$ which are divisible by 3 -- namely 12, 18, 24 and 30. Now $N (E) / C (E)$ acts irreducibly on $E$ (by \cite[Theorem 11.38]{LeTa2009}), and hence an inspection of the list
  of irreducible reflection groups reveals that the only possibility is $N (E)
  / C (E) \simeq L_4 = G_{32}$.
  
 By \cite[Proposition 11.14]{LeTa2009}, the maximal eigenspaces all
  have dimension equal to the number of degrees divisible by 3. In this
  case, $\dim (E) = 4.$ Hence $l (\left.\mathcal{U}'\right.^{\mathbb{C}^8}_{\zeta}
  (E_8)) = 3$, and so $\widetilde{H}_j ( \left.\mathcal{U}'\right.^{\mathbb{C}^8}_{\zeta}
  (E_8)) = 0$ for $j \neq 3$, and in particular
  $\left.\mathcal{U}'\right.^{\mathbb{C}^8}_{\zeta} (E_8)$ is homotopy equivalent to a
  bouquet of spheres in dimension 3. Now 3 is a regular number for $E_8$, by
  \cite[Theorem 11.28]{LeTa2009}. Hence by Corollary
  \ref{corollary:webb}(ii), the number of spheres in the bouquet is equal to
  $(2 \ast 8 \ast 14 \ast 20) \ast (1 \ast 7 \ast 13 \ast 19) = 7\, 745\, 920$.  Also, by Corollary \ref{corollary:webb}(iii), $\widetilde{H}_3
  (\left.\mathcal{U}'\right.^{\mathbb{C}^8}_{\zeta} (E_8))\simeq \tmop{Ind}_{L_4}^{E_8} \widetilde{H}_2 (
  \widetilde{\mathcal{S}}^E_1 (L_4)).$

Similarly consider the case $G = E_8$, $\gamma = \tmop{Id}$, $\zeta$ a
  primitive 4th root of unity. If $E$ is a maximal eigenspace then $N (E) /
  C (E) \simeq O_4 = G_{31}$. Again, $\widetilde{H}_i (\left.\mathcal{U}'\right.^{\mathbb{C}^8}_\zeta
  (E_8)) = 0$ for $i \neq 3$, and in particular
  $\left.\mathcal{U}'\right.^{\mathbb{C}^8}_\zeta (E_8))$ is homotopy equivalent to a bouquet
  of spheres in dimension 3. The number of spheres is equal to $(2 \ast 14
  \ast 18 \ast 30) \ast (1 \ast 13 \ast 17 \ast 29) = 63\,488\,880$, and $
  \widetilde{H}_3 (\left.\mathcal{U}'\right.^{\mathbb{C}^8}_\zeta (E_8))\simeq \tmop{Ind}_{O_4}^{E_8} \widetilde{H}_2 (
  \widetilde{\mathcal{S}}^E_1 (O_4)) $.
\end{example}

\section{Acknowledgements}
The author would like to acknowledge his supervisor Gus Lehrer and associate supervisor Anthony Henderson for their encouragement, patience, generosity and enthusiasm.

\bibliographystyle{plain}

\bibliography{bibliography2}

\end{document}